\theoremstyle{theorem}
\newtheorem{theorem}{Theorem}[section]
\theoremstyle{corollary}
\newtheorem{corollary}{Corollary}[section]
\theoremstyle{lemma}
\newtheorem{lemma}{Lemma}[section]
\theoremstyle{definition}
\theoremstyle{proof}
\theoremstyle{remark}
\newtheorem*{rem}{Remark}
\newcommand{\bel}[1]{\begin{equation}\label{#1}}
\newcommand{\be}{\begin{equation}}
\newcommand{\ba}{\begin{eqnarray}}
\newcommand{\ea}{\end{eqnarray}}
\newcommand{\bi}{\bibitem}
\newcommand{\qe}{\end{equation}}
\DeclareFontFamily{OT1}{pzc}{}
\DeclareMathAlphabet{\mathpzc}{OT1}{pzc}{m}{it}
\begin{document}
\setcounter{Maxaffil}{2}

\title{Characteristics polynomial of normalized Laplacian for trees}
%General Randi\'c indices for matching and $\cal{L}$- Co efficients some trees}
\author[1,2]{\rm Anirban Banerjee}
\author[1]{\rm Ranjit Mehatari}
\affil[1]{Department of Mathematics and Statistics}
\affil[2]{Department of Biological Sciences}
\affil[ ]{Indian Institute of Science Education and Research Kolkata}
\affil[ ]{Mohanpur-741246, India}
\affil[ ]{\textit {\{anirban.banerjee, ranjit1224\}@iiserkol.ac.in}}
\date{}
\maketitle
\begin{abstract}
Here, we find the characteristics polynomial of normalized Laplacian of a tree.  The coefficients of this polynomial are expressed by the higher order general Randi\'c  indices for matching, whose values depend on the structure of the tree. We also find the expression of these indices for starlike tree and a double-starlike tree, $H_m(p,q)$. Moreover, we show that two cospectral $H_m(p,q)$ of the same diameter are isomorphic.
\end{abstract}
\textbf{AMS classification: }05C50; 05C05.\\
\textbf{Keywards:}  Normalized Laplacian; Characteristics Polynomial; Tree; Starlike tree; Double Starlike tree; Randi\'c index; Matching; General Randi\'c index for matching; Eigenvalue 1.

\section{Introduction}
Let $\Gamma=(V,E)$ be a simple finite undirected graph of order $n$. Two vertices $u, v\in V$ are called neighbours, $u\sim v$, if they are connected by an edge in $E$, $u\nsim v$ otherwise. Let $d_v$ be the degree of a vertex $v\in V$, that is, the number of neighbours of $v$.
The \textit{normalized Laplacian matrix} \cite{Chung}, $\mathcal{L}$, of $ \Gamma $ is defined as:
\bel{int:eq1}
\mathcal{L}(\Gamma)_{u,v}=\begin{cases}
              1\text{ if } u=v\text{ and }d_v\neq0,\\
              -\frac{1}{\sqrt{d_ud_v}} \text{ if } u\sim v,\\
              0 \text{ otherwise. }
             \end{cases}
\qe
This $\mathcal{L}$ is similar to the normalized Laplacian $\Delta$ defined in \cite{ABJJ,Meh}.
Let  
%$ \phi_{\mathcal{L}(\Gamma)}(x) $ (simply 
$ \phi_\Gamma(x)=det(xI-\cal{L})$ be the characteristics polynomial of $\mathcal{L}(\Gamma)$. Let us consider $\phi_\Gamma(x)=a_0x^n-a_1x^{n-1}+a_2x^{n-2}-\cdots+(-1)^{n-1}a_{n-1}x+(-1)^na_n $. Now if $\Gamma$ has no isolated vertices then $a_0=1$, $a_1=n$, $a_2=\frac{n(n-1)}{2}-\sum_{i\sim j}\frac{1}{d_id_j}$ and $a_n=0$ (for some properties of $ \phi_\Gamma(x) $ see \cite{Guo}).
The zeros of $ \phi_\Gamma(x) $ are the eigenvalues of $\cal{L}$ and we order them as 
$\lambda_1\geq\lambda_2\geq\cdots\geq\lambda_{n}=0$. 
 $\Gamma$ is connected \textit{iff} $\lambda_{n-1}>0$. 
 $\lambda_1\le 2$, the equality holds   \textit{iff}  $ \Gamma $ has a bipartite component.
Moreover, $ \Gamma $ is bipartite \textit{iff} for each $ \lambda_i $, the value $ 2-\lambda_i $ is also an eigenvalue of
$ \Gamma $. See \cite{Chung} for more properties of the normalized Laplacian eigenvalues.

\subsection{General Randi\'c index for matching}
There are different topological indices. The degree based topological indices\cite{Gut2,Rad}, which include \textit{Randi\'c index}\cite{Ran}, reciprocal \textit{Randi\'c index}\cite{Li}, general  \textit{Randi\'c index}\cite{BoEd}, higher order \textit{connectivity index}\cite{Ali,Rad1,Rod}, \textit{connective eccentricity index}\cite{Yu1,Yu2}, \textit{Zagreb index}\cite{Abd,Gut1,Gut3,Ham,Kaz,Lin,Vas,Xu} are more popular than others.
\\

For any real number $ \alpha, $ the general Randi\'c index of a graph $\Gamma$ is defined by  B. Bollob\'as and P. Erd\"os (see \cite{BoEd}) as:
\bel{int:eq3}
R_\alpha (\Gamma)=\sum_{i\sim j}(d_id_j)^\alpha,
\qe
which is the general expression of the Randi\'c index  (also known as  connectivity index) introduced by M. Randi\'c in  1975 \cite{Ran} by choosing  $\alpha =- 1/2$ in (\ref{int:eq3}).
For more properties of the general Randi\'c index of graphs, we refer to \cite{Mcav,Li,Li1,Rod,Shi,Shi1}. \\

The Zagreb index of a graph was first introduced by 
Gutman et al.\cite{Gut3} in $1972$. For a graph $\Gamma$ the first and the second Zagreb indices are defined by
\begin{eqnarray*}
Z_1(\Gamma)=\sum_{i\in V}d_i^2\text{ and }
Z_2(\Gamma)=\sum_{i\sim j}d_id_j,
\end{eqnarray*}
 
 respectively. Now, for any positive integer $p $, we define the $p^{th}$ order general \textit{Randi\'c index for matching} as
\bel{}
R_\alpha^{(p)} (\Gamma)
=\sum_{M_p\in \mathcal{M}_p(\Gamma)}\prod_{e\in M_p}s(e)^\alpha,
\qe 
where $s(e)=d_ud_v$ is the \textit{strength} of the edge $e=uv\in  E$, $M_p$ is a $p$-matching, that is, a set of $p$ non-adjacent edges and $\mathcal{M}_p(\Gamma)$ is the set of all $p$-matchings in $\Gamma$. The first order general Randi\'c index for matching with $\alpha=1$ is  the second Zagreb index, that is,  $Z_2(\Gamma)=R_1^{(1)} (\Gamma)$. We take $R_\alpha(\Gamma)=R_\alpha^{(1)}(\Gamma)$ and  $$R_\alpha^{(0)}(\Gamma)=\begin{cases}0\text{ if }\Gamma \text{ is the null graph,}\\
1 \text{ otherwise.}\end{cases}$$ 
% Now for a tree $T$, we derive (or express the coefficients of) the characteristics polynomial $\phi_T(x)$ in terms of $R_{-1}^{(i)}$.
If $\Gamma$ is  $r$-regular, then  $R_{\alpha}^{(i)}=r^{2i\alpha}\mid\mathcal{M}_i(\Gamma)\mid$. The   $R_{-1}^{(2)}$ for some known graphs are as follows:
%However the second order general Randi\'c indices for matching with $\alpha=-1$ for some known graphs are given by,
$R_{-1}^{(2)}(S_n)=0$, $R_{-1}^{(2)}(P_n)=\frac{n^2-n-4}{32}$, $R_{-1}^{(2)}(C_n)=\frac{n(n-3)}{32}$, $R_{-1}^{(2)}(K_{p,q})=\frac{(p-1)(q-1)}{4pq}$, and $R_{-1}^{(2)}(K_n)=\frac{3\binom{n}{4}}{(n-1)^4}$, where the notations, $S_n$, $P_n$, $C_n$, $K_n$ and $K_{p,q}$ have their usual meanings.
%\begin{theorem}
%Let $\Gamma_1$, $\Gamma_2$ be two graphs and $p\geq0$, then,
%$$R_{\alpha}^{(p)}(\Gamma_1\cup\Gamma_2)\geq R_{\alpha}^{(p)}(\Gamma)+R_{\alpha}^{(p)}(\Gamma),$$
%with equality holds if, either\\
%i) $p=1$ or\\
%ii) either $\Gamma_1$ or $\Gamma_2$ is a null graph.
%\end{theorem}
%\begin{proof}
%Since $\mathcal{M}_p(\Gamma_1)\subset \mathcal{M}_p(\Gamma_1\cup\Gamma_2)$ and $\mathcal{M}_p(\Gamma_2)\subset \mathcal{M}_p(\Gamma_1\cup\Gamma_2)$. Hence the proof follows from the definition.
%\end{proof}
\begin{theorem}
For any  real number $ \alpha $, $$ 0\leq R_{\alpha}^{(2)}(\Gamma)\leq \frac{1}{2}\Big{[}R_{\alpha}(\Gamma)\Big{]}^2-\frac{1}{2}R_{2\alpha}(\Gamma)$$
\end{theorem}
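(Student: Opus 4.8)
The plan is to derive the whole statement from a single algebraic identity obtained by squaring the first-order index, together with the observation that every edge strength is strictly positive. First I would write the first-order index as a sum over the edge set,
\[
R_\alpha(\Gamma)=\sum_{e\in E}s(e)^\alpha ,
\]
and note that, because an edge joins two vertices of degree at least $1$, we have $s(e)=d_ud_v\ge 1>0$ for every edge $e=uv$; hence $s(e)^\alpha>0$ for \emph{every} real $\alpha$. This positivity is what makes both the inner sums nonnegative, and it is the only structural fact I need.

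Next I would square $R_\alpha(\Gamma)$ and separate the diagonal from the off-diagonal terms. Writing the expansion over ordered pairs of edges,
\[
\big[R_\alpha(\Gamma)\big]^2=\sum_{e\in E}s(e)^{2\alpha}+\sum_{\substack{e,f\in E\\ e\neq f}}s(e)^\alpha s(f)^\alpha
=R_{2\alpha}(\Gamma)+\sum_{\substack{e,f\in E\\ e\neq f}}s(e)^\alpha s(f)^\alpha ,
\]
so that after subtracting $R_{2\alpha}(\Gamma)$ and halving, the ordered sum collapses to a sum over \emph{unordered} pairs of distinct edges,
\[
\tfrac12\big[R_\alpha(\Gamma)\big]^2-\tfrac12 R_{2\alpha}(\Gamma)
=\sum_{\{e,f\}\subseteq E,\ e\neq f}s(e)^\alpha s(f)^\alpha .
\]

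The final step is to split the index set of this last sum according to whether the two edges share a vertex. A pair $\{e,f\}$ of distinct edges is either non-adjacent, in which case it is exactly a $2$-matching and contributes to $R_\alpha^{(2)}(\Gamma)$, or it is adjacent (shares a vertex). Therefore
\[
\sum_{\{e,f\}\subseteq E,\ e\neq f}s(e)^\alpha s(f)^\alpha
=R_\alpha^{(2)}(\Gamma)+\sum_{\{e,f\}\text{ adjacent}}s(e)^\alpha s(f)^\alpha .
\]
Since each summand on the right is a product of two positive numbers, the adjacent sum is nonnegative, which yields the upper bound $R_\alpha^{(2)}(\Gamma)\le \tfrac12[R_\alpha(\Gamma)]^2-\tfrac12 R_{2\alpha}(\Gamma)$. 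The lower bound $0\le R_\alpha^{(2)}(\Gamma)$ is immediate, as $R_\alpha^{(2)}(\Gamma)$ is itself a sum of products of positive strengths (and is $0$ precisely when $\Gamma$ has no $2$-matching).

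I do not expect a genuine obstacle here: the argument is pure bookkeeping. The two points that require care are (i) keeping track of the factor $\tfrac12$ when passing from ordered to unordered pairs of edges, and (ii) justifying positivity of $s(e)^\alpha$ uniformly in the real parameter $\alpha$, which is what guarantees that the discarded adjacent-pair sum is nonnegative rather than merely real.
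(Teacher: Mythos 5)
Your proof is correct and follows essentially the same route as the paper: both expand $\big[R_\alpha(\Gamma)\big]^2$ into the diagonal sum $R_{2\alpha}(\Gamma)$, the sum over $2$-matchings (which is $2R_\alpha^{(2)}(\Gamma)$), and the sum over adjacent edge pairs, then discard the last sum by positivity of the strengths. Your write-up merely makes explicit two details the paper leaves implicit --- the uniform positivity of $s(e)^\alpha$ for all real $\alpha$ and the ordered-versus-unordered pair bookkeeping --- which is a welcome clarification but not a different argument.
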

\begin{proof}
\begin{eqnarray*}
\Big{[}R_{\alpha}(\Gamma)\Big{]}^2&=&\Big{[}\sum_{\substack{e\in E}}{(s(e))^\alpha}\Big{]}^2\\
%&=&\sum_{\substack{e\in E}}(s(e))^{2\alpha}+2\sum_{\substack{e_1,e_2\in E}}(s(e_1)s(e_2))^\alpha\\
&=&\sum_{\substack{e\in E}}(s(e))^{2\alpha}+2\sum_{\substack{e_1,e_2\in \mathcal{M}_2(\Gamma)}}(s(e_1)s(e_2))^\alpha+2\sum_{\substack{e_1,e_2\notin \mathcal{M}_2(\Gamma)}}(s(e_1)s(e_2))^\alpha
\end{eqnarray*}
which proves our required result.
\end{proof}
Clearly, for any two graphs $\Gamma_1$, $\Gamma_2$, and $p\ge0$, $R_{\alpha}^{(p)}(\Gamma_1\cup\Gamma_2)\geq R_{\alpha}^{(p)}(\Gamma_1)+R_{\alpha}^{(p)}(\Gamma_2)$. The equality holds, when $p=1$ or one of the graphs is null.

It has been seen that the matching plays a role in the spectrum of a tree. In \cite{chen}, some results on normalized Laplacian spectrum for trees have been discussed.
Now, we derive (or express the coefficients of) the characteristics polynomial $\phi_T(x)$ of a tree $T$  in terms of $R_{-1}^{(i)}(T)$.

%%%%%%%%
%%%%%%%%
\section{The characteristics polynomial of normalized Laplacian for a tree}

\begin{theorem}
\label{cp1}
Let T be a tree with n vertices and maximum matching number k, then 
\begin{equation}
\phi_T(x)=\sum_{i=0}^k(-1)^i(x-1)^{n-2i}R_{-1}^{(i)}(T)
\end{equation}
and the coefficients of $\phi_T$  are given by
\bel{eq1}
a_p=\sum_{i=0}^k(-1)^i\binom{n-2i}{p-2i}R_{-1}^{(i)}(T).
\qe
\end{theorem}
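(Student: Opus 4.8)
The plan is to reduce the normalized Laplacian determinant to a weighted adjacency determinant and then use the cycle decomposition of permutations, together with the acyclicity of $T$, to identify the surviving terms with matchings; this is a weighted version of the classical fact that the characteristic polynomial of a forest is its matching polynomial.

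First I would write $\mathcal{L}=I-D^{-1/2}AD^{-1/2}$, where $A$ is the adjacency matrix and $D=\mathrm{diag}(d_1,\dots,d_n)$ (all degrees are positive, since a tree has no isolated vertex). Setting $y=x-1$,
$$xI-\mathcal{L}=yI+D^{-1/2}AD^{-1/2}=D^{-1/2}(yD+A)D^{-1/2},$$
so by multiplicativity of the determinant $\phi_T(x)=\big(\det D^{-1/2}\big)^2\det(yD+A)=\frac{1}{\prod_v d_v}\det(yD+A)$. This converts the problem into computing the determinant of the integer-weighted matrix $M:=yD+A$, whose diagonal is $M_{vv}=y\,d_v$ and whose off-diagonal entries are the adjacencies.

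Next I would expand $\det M=\sum_{\sigma}\mathrm{sgn}(\sigma)\prod_v M_{v,\sigma(v)}$ over permutations and analyze each term through the cycle decomposition of $\sigma$. A nonzero contribution forces every nontrivial cycle $(i_1\cdots i_\ell)$ to trace a closed walk $i_1\sim i_2\sim\cdots\sim i_\ell\sim i_1$ in $T$; since $T$ is acyclic, no cycle of length $\ell\ge 3$ can occur, so only fixed points and transpositions along edges survive. Thus each surviving $\sigma$ is determined by a set of $i$ disjoint edges, i.e.\ an $i$-matching $M_i$: the $2i$ matched vertices form the transpositions and the remaining $n-2i$ vertices are fixed. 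A product of $i$ transpositions has sign $(-1)^i$, each transposition contributes $A_{uv}A_{vu}=1$, and each fixed point $v$ contributes $y\,d_v$, so the matching $M_i$ contributes $(-1)^i\,y^{\,n-2i}\prod_{v\notin V(M_i)}d_v$.

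Finally I would rewrite $\prod_{v\notin V(M_i)}d_v=\frac{\prod_v d_v}{\prod_{e\in M_i}s(e)}$ and sum over all matchings, obtaining
$$\det M=\prod_v d_v\sum_{i=0}^{k}(-1)^i y^{\,n-2i}\sum_{M_i\in\mathcal{M}_i(T)}\prod_{e\in M_i}s(e)^{-1}=\prod_v d_v\sum_{i=0}^{k}(-1)^i(x-1)^{n-2i}R_{-1}^{(i)}(T),$$
since the inner sum is exactly $R_{-1}^{(i)}(T)$ by definition. Dividing by $\prod_v d_v$ yields the first formula. For the coefficient identity I would expand each $(x-1)^{n-2i}$ by the binomial theorem, extract the coefficient of $x^{n-p}$ (which simplifies to $\binom{n-2i}{p-2i}(-1)^{p}$, using $(-1)^{-2i}=1$ and $\binom{n-2i}{n-p}=\binom{n-2i}{p-2i}$), and match it against the stated convention $\phi_T(x)=\sum_{p}(-1)^p a_p x^{n-p}$ to read off $a_p$. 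I expect the cycle-decomposition step to be the main obstacle: one must argue carefully that acyclicity eliminates every contribution except matchings and correctly track the $(-1)^i$ sign of a product of transpositions; everything after that is routine bookkeeping.
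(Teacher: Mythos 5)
Your proposal is correct and is essentially the paper's own argument: both proofs expand the characteristic determinant over permutations (Leibniz expansion), use the acyclicity of $T$ to rule out cycles of length $\ge 3$ so that only the identity and products of disjoint transpositions along edges survive, identify those permutations with matchings carrying sign $(-1)^i$, and finish with the binomial expansion of $(x-1)^{n-2i}$ to read off the coefficients. The only difference is cosmetic: you first factor $xI-\mathcal{L}=D^{-1/2}\bigl((x-1)D+A\bigr)D^{-1/2}$ and expand $\det\bigl((x-1)D+A\bigr)$, dividing by $\prod_v d_v$ at the end, whereas the paper expands $\det(xI-\mathcal{L})$ directly with entries $\frac{1}{\sqrt{d_id_j}}$; the degree bookkeeping lands in the same place.
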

\begin{proof}
Consider a matrix, $B=[b_{ij}]=xI_n-\cal{L}$, where $$b_{ij}=\begin{cases}
x-1\text{ if } i=j\\
\frac{1}{\sqrt{d_id_j}}\text{ if }i\sim j\\
0 \text{ else}.
\end{cases}$$
Now,
\begin{eqnarray*}
\phi_T(x)&=&det(B)\\
&=&\sum_{\sigma\in \mathcal{S}_n } \mathpzc{b}_{\sigma},
%&=&\sum_{\sigma\in \mathcal{S}_n }sgn(\sigma)b_{1,\sigma(1)}b_{2,\sigma(2)}\cdots b_{n,\sigma(n)}.
 \end{eqnarray*}
 where $\mathpzc{b}_{\sigma} = sgn(\sigma)b_{1,\sigma(1)}b_{2,\sigma(2)}\cdots b_{n,\sigma(n)}$ and  $\mathcal{S}_n$ is the set of all permutation of $\{ 1,\dots,n\}$.
Now,  for any $\sigma\in \mathcal{S}_n$ and $\sigma(i)\neq i$, $\mathpzc{b}_\sigma\neq0$  only when $i\sim \sigma(i)$. 
Since, $T$ does not contain any cycle, here, $\sigma$ is either the identity permutation or a product of disjoint transpositions. 
When $\sigma$ is the identity permutation, $\mathpzc{b}_\sigma=(x-1)^n$ and if  $\sigma=(i_1\sigma(i_1))(i_2\sigma(i_2))\cdots(i_l\sigma(i_l))$ is a product of disjoint transpositions, then, 
\begin{eqnarray*}
\mathpzc{b}_\sigma=\begin{cases}
(-1)^l(x-1)^{n-2l}\frac{1}{d_{i_1}d_{\sigma(i_1)}}\frac{1}{d_{i_2}d_{\sigma(i_2)}}\cdots\frac{1}{d_{i_l}d_{\sigma(i_l)}}\text{ if }i_j\sim\sigma(i_j)\textit{ }\forall j\\
0 \text{ otherwise.}
\end{cases}
\end{eqnarray*}
%Thus, 
%\begin{eqnarray*}
% \phi_T(x)=\sum_{M\in \mathcal{M}}(-1)^{\vert M\vert}(x-1)^{n-2\vert M\vert}\prod_{e\in M}\frac{1}{s(e)}
%\end{eqnarray*} 
Now, since, $T$ has the maximum matching number $k$, 
\begin{eqnarray*}
\phi_T(x)&=&\sum_{i=0}^k\sum_{M\in \mathcal{M}_i}(-1)^{\vert M\vert}(x-1)^{n-2\vert M\vert}\prod_{e\in M}\frac{1}{s(e)}\\
%&=&\sum_{i=0}^k(-1)^i(x-1)^{n-2i}\sum_{M\in \mathcal{M}_i}\prod_{e\in M}\frac{1}{s(e)}\\
&=&\sum_{i=0}^k(-1)^i(x-1)^{n-2i}R_{-1}^{(i)}(T).
\end{eqnarray*}
%For the last part of the theorem, we have,
%\begin{eqnarray*}
%\phi_T(x)=(x-1)^{n}R_{-1}^{(0)}(T)-(x-1)^{n-2}R_{-1}^{(1)}(T)+\cdots +(-1)^k(x-1)^{n-2k}R_{-1}^{(k)}(T)
%\end{eqnarray*}
Expanding the right hand side of the above equation we get 
$$
a_p=\sum_{i=0}^k(-1)^i\binom{n-2i}{p-2i}R_{-1}^{(i)}(T).
$$
\end{proof}

\begin{corollary}
\label{cor1}
For a tree T with maximum matching number k,
\begin{eqnarray}
1-R_{-1}(T)+R_{-1}^{(2)}(T)-\cdots+(-1)^kR_{-1}^{(k)}(T)=0.
\end{eqnarray}
\end{corollary}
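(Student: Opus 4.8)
The plan is to obtain the identity by evaluating the closed form of $\phi_T$ from Theorem~\ref{cp1} at the single point $x=0$. The key fact I would invoke is already recorded in the introduction: for a graph with no isolated vertices the constant term of $\phi_\Gamma$ vanishes, $a_n=0$, equivalently $\lambda_n=0$ is always an eigenvalue of $\mathcal{L}$ (the vector $D^{1/2}\mathbf{1}$ lies in the kernel). Hence $\phi_T(0)=(-1)^n a_n=0$ for every tree $T$, and this is the only input beyond Theorem~\ref{cp1} that I need.

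Next I would substitute $x=0$ directly into the expansion $\phi_T(x)=\sum_{i=0}^k(-1)^i(x-1)^{n-2i}R_{-1}^{(i)}(T)$. Each factor becomes $(0-1)^{n-2i}=(-1)^{n-2i}=(-1)^n$, since the exponent differs from $n$ by the even number $2i$. Pulling the common sign $(-1)^n$ out of the sum gives
\[
0=\phi_T(0)=(-1)^n\sum_{i=0}^k(-1)^i R_{-1}^{(i)}(T).
\]
Because $(-1)^n\neq 0$, I may cancel it and conclude $\sum_{i=0}^k(-1)^i R_{-1}^{(i)}(T)=0$.

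Finally I would translate this compact sum into the displayed form of the corollary using the conventions fixed earlier in the paper: $R_{-1}^{(0)}(T)=1$ (as $T$ is not the null graph) and $R_{-1}^{(1)}(T)=R_{-1}(T)$. Writing out the terms for $i=0,1,2,\dots,k$ then yields exactly $1-R_{-1}(T)+R_{-1}^{(2)}(T)-\cdots+(-1)^kR_{-1}^{(k)}(T)=0$. There is no real obstacle in this argument; the only point demanding the slightest care is verifying that the parity simplification $(-1)^{n-2i}=(-1)^n$ holds uniformly in $i$ so that the sign genuinely factors out, and confirming that $0$ being an eigenvalue is legitimately available rather than something requiring separate justification.
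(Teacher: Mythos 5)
Your proof is correct and follows exactly the route the paper intends for this corollary: evaluate the expansion of $\phi_T$ from Theorem~\ref{cp1} at $x=0$ (equivalently, use $a_n=0$, i.e.\ that $0$ is always an eigenvalue of $\mathcal{L}$ for a tree), factor out $(-1)^n$ via the parity observation $(-1)^{n-2i}=(-1)^n$, and invoke the conventions $R_{-1}^{(0)}(T)=1$, $R_{-1}^{(1)}(T)=R_{-1}(T)$. The paper leaves this derivation implicit, and your write-up supplies precisely that argument with no gaps.
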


\begin{corollary}
\label{cor2}
Let $T$ be a tree with maximum matching number $k$. The eigenvalues of $T$ are $ 1$ with the multiplicity $n-2k$, and  $ 1\pm\sqrt{\alpha_i}$   $(1\leq i\leq k)$ where $ \alpha_i $'s are the zeros of the polynomial 
\begin{eqnarray}
\psi_T(y)= y^{k}-R_{-1}(T)y^{k-1}+\cdots +(-1)^kR_{-1}^{(k)}(T).
\end{eqnarray}
\end{corollary}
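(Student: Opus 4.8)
The plan is to derive both claims directly from Theorem~\ref{cp1}, which expresses $\phi_T(x)$ as a polynomial in $(x-1)$. The key observation is that the given formula
\begin{eqnarray*}
\phi_T(x)=\sum_{i=0}^k(-1)^i(x-1)^{n-2i}R_{-1}^{(i)}(T)
\end{eqnarray*}
is built entirely from powers of $(x-1)$, so it is natural to factor out the lowest such power and introduce the substitution $y=(x-1)^2$. First I would factor $(x-1)^{n-2k}$ out of the sum, writing
\begin{eqnarray*}
\phi_T(x)=(x-1)^{n-2k}\sum_{i=0}^k(-1)^i(x-1)^{2(k-i)}R_{-1}^{(i)}(T),
\end{eqnarray*}
and then reindex the inner sum (or observe directly) that the bracketed factor equals $(x-1)^{2k}\,\psi_T\!\big((x-1)^2\big)$ is not quite right dimensionally, so I would instead recognize the inner sum as a polynomial of degree $k$ in $y=(x-1)^2$ whose coefficients are exactly $(-1)^i R_{-1}^{(i)}(T)$. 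This identifies the inner factor with $\psi_T(y)$ up to the ordering of terms.

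For Corollary~\ref{cor1}, I would simply evaluate the identity $\phi_T(x)=\sum_{i=0}^k(-1)^i(x-1)^{n-2i}R_{-1}^{(i)}(T)$ at $x=1$ in a way that isolates the constant term. Since $\lambda_n=0$ is always an eigenvalue (the paper notes $a_n=0$), the point $x=0$ is a root, but the cleaner route is to use that $1$ is an eigenvalue of $\mathcal{L}$ for any tree with $n-2k>0$ and, more robustly, to compare the coefficient of $(x-1)^{n-2k}$ on both sides. Setting $x=1$ directly kills every term with $n-2i>0$, i.e.\ every term with $i<k$ if $n=2k$; in the balanced case $n=2k$ this immediately yields $\sum_{i=0}^k(-1)^iR_{-1}^{(i)}(T)=0$, which is exactly the claim with $R_{-1}^{(0)}=1$. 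For the general case $n>2k$ I would instead divide by $(x-1)^{n-2k}$ first and then set $x=1$, since the quotient is precisely $\psi_T\big((x-1)^2\big)$ and its value at $x=1$ is the constant term $\psi_T(0)=(-1)^kR_{-1}^{(k)}(T)$; the vanishing then follows from the fact that $\psi_T(0)$ must combine with the rest to reproduce the known value $\phi_T(0)=(-1)^n a_n=0$. I expect the bookkeeping to reconcile these two viewpoints to be the only delicate point.

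For Corollary~\ref{cor2}, I would read off the roots of $\phi_T$ from the factorization. The factor $(x-1)^{n-2k}$ contributes the root $x=1$ with multiplicity $n-2k$, giving the eigenvalue $1$ with that multiplicity. The remaining roots come from $\psi_T\big((x-1)^2\big)=0$, i.e.\ from $(x-1)^2=\alpha_i$ where $\alpha_i$ ranges over the $k$ zeros of $\psi_T(y)$; solving gives $x=1\pm\sqrt{\alpha_i}$, which is exactly the stated eigenvalue list. I would also note that this accounts for all $n$ eigenvalues: $(n-2k)+2k=n$, and that the construction is consistent with the bipartite symmetry $\lambda\mapsto 2-\lambda$ mentioned in the introduction, since $1+\sqrt{\alpha_i}$ and $1-\sqrt{\alpha_i}$ are symmetric about $1$.

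The main obstacle I anticipate is purely organizational rather than mathematical: ensuring the polynomial $\psi_T(y)$ as defined, namely $\psi_T(y)=y^k-R_{-1}(T)y^{k-1}+\cdots+(-1)^kR_{-1}^{(k)}(T)$, matches the reindexed inner sum coefficient-by-coefficient, since the exponent of $y$ in the factorization runs as $k-i$ while the general-Randi\'c superscript runs as $i$. I would verify that the leading coefficient of $\psi_T$ corresponds to $i=0$ (where $R_{-1}^{(0)}=1$) and the constant term to $i=k$, confirming the sign $(-1)^k$ and the index reversal. Once this correspondence is pinned down, both corollaries follow immediately, with Corollary~\ref{cor1} being the special evaluation $\psi_T(0)=0$ that holds precisely when $n=2k$ (or, equivalently, the statement that $y=0$ is a root of $\psi_T$ iff $T$ has a perfect matching), a subtlety I would flag explicitly.
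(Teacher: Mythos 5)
Your main line for Corollary~\ref{cor2} --- factor $\phi_T(x)=(x-1)^{n-2k}\,\psi_T\big((x-1)^2\big)$ out of the formula in Theorem~\ref{cp1}, check that the coefficient of $y^{k-i}$ in the inner polynomial is $(-1)^iR_{-1}^{(i)}(T)$ (so $i=0$ gives the leading term and $i=k$ the constant term), and then read off the roots $x=1$ and $x=1\pm\sqrt{\alpha_i}$ --- is exactly the derivation the paper intends; the paper states the corollary without proof precisely because this substitution $y=(x-1)^2$ is immediate from the theorem.

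However, your closing remarks contain a genuine error, and it is not merely tangential: it touches the one point in Corollary~\ref{cor2} that actually needs an argument. You claim that $\psi_T(0)=0$ ``holds precisely when $n=2k$,'' equivalently that $y=0$ is a root of $\psi_T$ iff $T$ has a perfect matching. This is false: the constant term of $\psi_T$ is $(-1)^kR_{-1}^{(k)}(T)$, and since $k$ is the \emph{maximum} matching number, $\mathcal{M}_k(T)\neq\emptyset$ and $R_{-1}^{(k)}(T)=\sum_{M\in\mathcal{M}_k(T)}\prod_{e\in M}s(e)^{-1}>0$, so $\psi_T(0)\neq0$ for \emph{every} tree. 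This nonvanishing is exactly what guarantees that no $\alpha_i$ equals $0$, hence that no root $1\pm\sqrt{\alpha_i}$ collapses onto $1$, hence that the multiplicity of the eigenvalue $1$ is \emph{exactly} $n-2k$ as the corollary asserts; your proof should state this, and your version of the claim would, in the case $n=2k$, contradict the corollary itself (it would force $1$ to be an eigenvalue while the corollary gives it multiplicity $0$). Relatedly, you have confused $\psi_T(0)$ with $\psi_T(1)$: Corollary~\ref{cor1} is the statement $\psi_T(1)=0$, which follows because $0$ is always an eigenvalue, so $0=\phi_T(0)=(-1)^{n-2k}\psi_T\big((0-1)^2\big)=(-1)^{n-2k}\psi_T(1)$, and expanding $\psi_T(1)$ gives $1-R_{-1}(T)+R_{-1}^{(2)}(T)-\cdots+(-1)^kR_{-1}^{(k)}(T)$. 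Your alternative route of setting $x=1$ computes $\phi_T(1)=\det(I-\mathcal{L})=(-1)^kR_{-1}^{(k)}(T)$ when $n=2k$, which is a different (and nonzero) quantity and does not yield Corollary~\ref{cor1}.
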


The characteristics polynomial $\phi_T(x)$ of a tree $T$  can be expressed in terms of $R_{-1}^{(i)}(T)$, whose value depends on the structure of $T$. Now, we find the expression of  $R_{-1}^{(i)}(T)$ for 
 two different trees, starlike tree \cite{Gut,Rad1} and a specific type of double starlike trees \cite{Laz,Lu}. 

%%%%%%%%%
%%%%%%%%%
\subsection{Starlike tree}
A tree is called \textit{starlike} tree (see Figure \ref{fig1}) if it has exactly one vertex $ v $ of degree grater than two. 
We denote a starlike tree with $d_v=r$, $3\leq r\leq n-1,$  by $T(l_1,l_2,\ldots ,l_r)$ where $l_i$'s are positive integers with $l_1+l_2+\cdots +l_r=n-1$, that is, 
$T(l_1,l_2,\ldots ,l_r)-v=P_{l_1}\cup P_{l_2}\cup\cdots\cup P_{l_r}$
where $P_{l_i}$ is, a path on $l_i$ vertices, connected to $v$ (see figure (1) for an example). 
Now onwards, without loss of any generality, we assume $1 \le l_1 \le l_2 \le \dots \le l_r$. 
\begin{figure}[h]
\label{fig1}
\centering
\includegraphics[width=12cm]{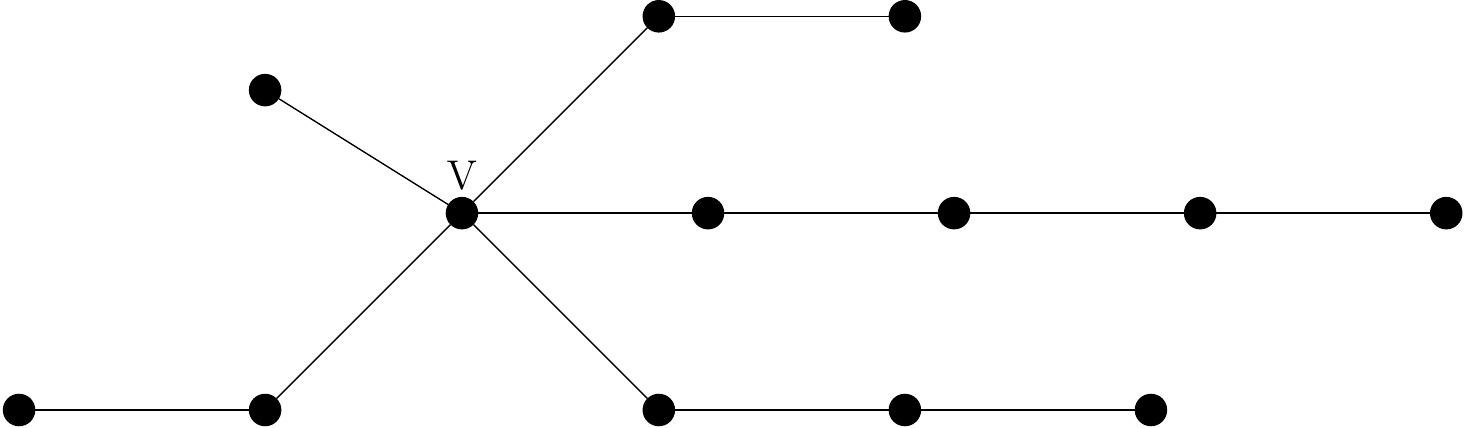}
\caption{ Starlike tree, $T(1,2,2,3,4)$}
\end{figure}
\begin{theorem}
\label{th30}
Let $T(l_1,l_2,\ldots ,l_r)$ be a starlike tree on n vertices with maximum matching number k. If there are exactly m ($m\neq 0$) number of $l_i$'s which are odd numbers then
\begin{enumerate}
\item[(i)]
T has the maximum matching number $ \frac{n-m+1}{2}$, 
\item[(ii)]
 $\displaystyle{R_{-1}(T)=\frac{1}{4}(n+1)+\frac{m}{4}(\frac{2}{r}-1)}$.\\
 
 Furthermore, if $ l_{p_1},l_{p_2},\ldots l_{p_m}$ are  odd  then
\item[(iii)]
$\displaystyle{\vert \mathcal{M}_k(T)\vert=\frac{\prod (l_{p_j}+1)}{2^{m-1}}.\sum_{j=1}^m\frac{1}{l_{p_j}+1}}$, and
\item[(iv)]
$\displaystyle{R_{-1}^{(k)} (T)=\frac{\prod l_{p_j}}{\prod d_i}.\sum_{j=1}^m\frac{1}{l_{p_j}}.}$
\end{enumerate} 
\end{theorem}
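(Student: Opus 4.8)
The plan is to read off all four statements from the degree structure of $T=T(l_1,\dots,l_r)$. Write $v$ for the centre ($d_v=r$) and, for each arm, let $u_i$ be its vertex adjacent to $v$; then $d_{u_i}=2$ if $l_i\ge 2$ and $d_{u_i}=1$ if $l_i=1$, every interior vertex of an arm has degree $2$, and the far end of each arm is a leaf. I will repeatedly use two elementary facts about a path $P_l$: it has a unique perfect matching when $l$ is even, and when $l$ is odd it has exactly $(l+1)/2$ maximum matchings, each leaving one (odd-position) vertex uncovered. For (i) I would first build a matching with exactly $m-1$ uncovered vertices --- match $v$ to $u_{p}$ for one odd arm $p$ and perfectly match the remaining (even) part of that arm, perfectly match every even arm, and take a near-perfect matching of each of the other $m-1$ odd arms --- giving $k\ge (n-m+1)/2$. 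Optimality is the $S=\{v\}$ instance of the Tutte--Berge formula written out by hand: any matching uses at most one edge at $v$, so at least $m-1$ of the $m$ odd components of $T-v=P_{l_1}\cup\cdots\cup P_{l_r}$ keep an uncovered vertex, whence $k=(n-m+1)/2$.

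For (ii) I would partition $E$ by edge strength $s(e)=d_ud_v$. The edges at $v$ have strength $r$ (one per arm of length $1$) or $2r$ (one per arm with $l_i\ge2$); inside an arm with $l_i\ge 2$ there are one edge of strength $2$ (at the leaf) and $l_i-2$ edges of strength $4$. Summing the reciprocals and using $\sum_i l_i=n-1$, the interior edges and the edges at $v$ combine into a main term $\tfrac14(n+1)$ together with a correction $\tfrac14(\tfrac2r-1)$ carried entirely by the edges incident to $v$ (one per arm of length one); collecting terms gives (ii).

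Parts (iii) and (iv) run off the same structural description of $\mathcal M_k(T)$: by the analysis in (i), every maximum matching arises in exactly one way by choosing the odd arm $p_i$ whose vertex $u_{p_i}$ is matched to $v$ (this forces a unique perfect matching of the rest of that arm and of all even arms) and then choosing, independently, a maximum matching of each remaining odd arm $p_j$. As arm $p_j$ offers $(l_{p_j}+1)/2$ choices, summing $\prod_{j\ne i}(l_{p_j}+1)/2$ over the $m$ choices of $i$ yields (iii). For (iv) I would use the identity $\prod_{e\in M}s(e)^{-1}=\big(\prod_{w\ \mathrm{uncovered}}d_w\big)/\prod_i d_i$, which holds because a matching covers each of its vertices exactly once. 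Here the uncovered vertices are one per odd arm $p_j$ with $j\ne i$, and summing the uncovered-vertex degree over the $(l_{p_j}+1)/2$ matchings of one odd arm gives exactly $l_{p_j}$ (the $(l_{p_j}-1)/2$ interior positions contribute degree $2$ and the leaf position contributes degree $1$). Hence the numerator equals $\sum_i\prod_{j\ne i}l_{p_j}=\big(\prod_j l_{p_j}\big)\sum_i l_{p_i}^{-1}$, and dividing by $\prod_i d_i$ gives (iv).

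The main obstacle is the structural classification of $\mathcal M_k(T)$ underlying (i), (iii) and (iv): one must show that matching $v$ into an even arm, or leaving $v$ uncovered, is never optimal, and that the configurations listed above are mutually distinct and exhaust all maximum matchings. Once this bijection between $\mathcal M_k(T)$ and (choice of odd arm at $v$)$\times$(maximum matchings of the other odd arms) is secured, (iii) is a plain count and (iv) a weighted count that factorises over the arms, while (ii) needs no matching theory and is pure edge bookkeeping.
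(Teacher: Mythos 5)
Your approach is essentially the one the paper takes: the same greedy construction for (i), the same strength-by-strength edge count for (ii), and the same classification of maximum matchings (match $v$ into one odd arm, take near-perfect matchings on the remaining odd arms and perfect matchings on the even arms) driving (iii) and (iv). Where you deviate, you are more careful than the paper: the paper calls (i) ``obvious'' and never argues that leaving $v$ uncovered or matching it into an even arm is suboptimal, whereas your Tutte--Berge count with $S=\{v\}$ settles both the upper bound and the classification at once; and in (iv) your identity $\prod_{e\in M}s(e)^{-1}=\bigl(\prod_{w\,\mathrm{uncovered}}d_w\bigr)\big/\prod_i d_i$ reaches the factorization $\sum_j\prod_{k\ne j}l_{p_k}$ in one step, where the paper expands in powers of $2$ and then re-collects.

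There is, however, a genuine problem in (ii), which your own bookkeeping exposes but which you then gloss over. As you correctly note, the strength-$r$ edges are in bijection with the arms of length exactly one. Carrying your computation through gives
\[
R_{-1}(T)=\frac{n+1}{4}+\frac{t}{4}\Bigl(\frac{2}{r}-1\Bigr),
\]
where $t$ is the number of $l_i$ equal to $1$, not the number $m$ of odd $l_i$; the two agree only when every odd arm has length one. So ``collecting terms gives (ii)'' is a non sequitur, and in fact (ii) as stated is false whenever some odd $l_i\ge 3$: for $T(1,1,2,3)$ (entry 6 of the paper's own table) one has $R_{-1}=2$, while (ii) with $n=8$, $r=4$, $m=3$ yields $15/8$; the display above with $t=2$ gives $2$. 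The paper's proof commits exactly the same conflation by asserting that $T$ has ``$m$ edges of strength $r$''. Your write-up should state and prove the corrected formula (or note the restriction $t=m$) rather than claim (ii). Parts (i), (iii) and (iv) are unaffected: they are correct with $m$ the number of odd arms, and your arguments for them are sound.
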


\begin{proof}
\begin{enumerate}
\item[(i)]
This part is obvious, since a maximal matching can be taken as follows: for each even $l_i$, we cover the corresponding $P_{l_i}$   by a perfect matching. Consider another  perfect matching on a $P_{l_i+1}$ where $v\in P_{l_i+1}$ and $l_i$ is odd. For all other odd $ l_i $'s, take  $\frac{l_i-1}{2}$ matching.
\item[(ii)] $ T $ has $m$ edges of strength $ r$, $ (r-m) $ edges of strength $ 2r $, $ (r-m) $ edges of strength $ 2 $ and the rest $(n-2r+m-1)$ edges are of strength $ 4 $.
Thus,
\begin{eqnarray*}
R_{-1}(T)&=&\sum_{e\in E}\frac{1}{s(e)}\\
&=&\frac{1}{4}(n+1)+\frac{m}{4}(\frac{2}{r}-1).
\end{eqnarray*} 
\item[(iii)]
$ P_x $ has maximum matching number $ \frac{x-1}{2} $ with $ \frac{x+1}{2} $ number of matchings, when $x$ is odd. Thus, 
\begin{eqnarray*}
\vert \mathcal{M}_k(T)\vert &=&\sum_{j=1}^m\prod_{\substack{k=1\\k\neq j}}^m\frac{(l_{p_k}+1)}{2}\\
&=&\frac{\prod (l_{p_j}+1)}{2^{m-1}}.\sum_{j=1}^m\frac{1}{l_{p_j}+1}.
\end{eqnarray*}
\item[(iv)]
To get  a maximal matching in $T(l_1,l_2,\ldots ,l_r)$, where  $ l_{p_1},l_{p_2}, \ldots, l_{p_m}$ are odd,
 one $l_{p_j}$ is combined with $v_1$ and the   in other odd  $l_{p_j}$ one vertex, of degree one or two,  remains uncovered. 
   One or $ \frac{l_{p_j}-1}{2} $ positions are possible if the degree of the uncover vertex is  one or two respectively. 
Thus, 
\begin{eqnarray*}
R_{-1}^{(k)} (T)&=&\frac{1}{\prod d_i}\Big{[}m+(m-1).2\sum_{j=1}^m\frac{(l_{p_j} -1)}{2}+\cdots +2^{k-1}\sum_{j=1}^m\prod_{\substack{k=1\\k\neq j}}^m\frac{(l_{p_k}-1)}{2}\Big{]}\\
%&=&\frac{1}{\prod d_i}\Big{[}m+(m-1).\sum_{j=1}^m(l_{p_j}-1)+\cdots +\sum_{j=1}^m\prod_{\substack{k=1\\k\neq j}}^m(l_{p_k}-1)\Big{]}\\
&=&\frac{1}{\prod d_i}\Big{[}\sum_{j=1}^m\prod_{\substack{k=1\\k\neq j}}^m(1+(l_{p_k}-1))\Big{]}\\
&=&\frac{\prod l_{p_j}}{\prod d_i}.\sum_{j=1}^m\frac{1}{l_{p_j}}.
\end{eqnarray*} 
\end{enumerate}
\end{proof}

\begin{corollary}
If $T$ is a tree as in theorem (\ref{th30}), then the multiplicity of the eigenvalue $1$ 
%of $\cal{L}$$(T)$  
is $m-1$.
\end{corollary}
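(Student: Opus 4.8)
The plan is to combine the structural data of Theorem \ref{th30} with the eigenvalue description already established in Corollary \ref{cor2}. First I would recall that Corollary \ref{cor2} splits the spectrum of $\mathcal{L}(T)$ into the eigenvalue $1$ occurring with multiplicity $n-2k$, together with the $2k$ values $1\pm\sqrt{\alpha_i}$, where $\alpha_1,\dots,\alpha_k$ are the roots (with multiplicity) of $\psi_T(y)$. Consequently, the multiplicity of the eigenvalue $1$ equals $n-2k$ \emph{provided} none of the $\alpha_i$ vanishes; were some $\alpha_i=0$ to occur, it would contribute an extra copy of $1=1\pm\sqrt{0}$ and thereby inflate the count. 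So the argument has two ingredients: ruling out a zero root of $\psi_T$, and then evaluating $n-2k$.

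Next I would dispose of the degenerate case. The constant term of $\psi_T$ is $\psi_T(0)=(-1)^kR_{-1}^{(k)}(T)$, so $0$ is a root of $\psi_T$ if and only if $R_{-1}^{(k)}(T)=0$. By part (iv) of Theorem \ref{th30},
\[
R_{-1}^{(k)}(T)=\frac{\prod l_{p_j}}{\prod d_i}\sum_{j=1}^m\frac{1}{l_{p_j}},
\]
which is strictly positive because $m\neq 0$ and all the $l_{p_j}$ and $d_i$ are positive integers. Hence $\psi_T(0)\neq 0$, no $\alpha_i$ equals $0$, and the multiplicity of the eigenvalue $1$ is exactly $n-2k$.

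Finally I would substitute the value of the maximum matching number supplied by part (i) of Theorem \ref{th30}, namely $k=\frac{n-m+1}{2}$, to obtain
\[
n-2k=n-(n-m+1)=m-1,
\]
which is the assertion. I do not expect any serious obstacle here, since the corollary is essentially a one-line substitution into the already-proved eigenvalue decomposition. The only point that genuinely requires care is the verification that $\psi_T$ has no zero root, so that the $n-2k$ figure is not augmented by degenerate eigenvalues equal to $1$; this is handled cleanly by the positivity of $R_{-1}^{(k)}(T)$ coming from Theorem \ref{th30}(iv).
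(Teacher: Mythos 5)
Your proof is correct and takes essentially the same route the paper intends: the corollary is an immediate consequence of Corollary \ref{cor2} and Theorem \ref{th30}(i), since the multiplicity of the eigenvalue $1$ is $n-2k$ and substituting $k=\frac{n-m+1}{2}$ gives $m-1$. Your extra verification that $\psi_T(0)=(-1)^kR_{-1}^{(k)}(T)\neq 0$ (so that no root $\alpha_i=0$ produces additional eigenvalues equal to $1$) is a careful point the paper leaves implicit, though it holds for any tree since a maximum matching always exists and makes $R_{-1}^{(k)}(T)>0$.
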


\begin{rem}
If $ m=0 $ in theorem (\ref{th30}), then $T$ has maximum matching number $ k=\frac{n-1}{2} $ with $ \vert \mathcal{M}_k(T)\vert= \frac{n+1}{2}$ and $R_{-1}^{(k)} (T)=\frac{n-1}{\prod d_i}.$
\end{rem}

\textbf{Example: }  The spectrum of different starlike trees with $ n=8 $ vertices are given bellow. Superscripts in the table show the   algebraic multiplicity of an eigenvalue.
\begin{center}

\begin{tabular}{|l|c|c|r|}
    \hline
    & Partition & Randi\'c  Indices& $\lambda(\cal{L})$ \\
    \hline
   1. & 1,1,5       & $ \frac{25}{12} $, $ \frac{21}{16} $, $ \frac{11}{48} $& 0,2,$ 1^{2} $,$ 1\pm\frac{\sqrt{13\pm\sqrt{37}}}{2\sqrt{6}} $\\
   2. & 1,2,4       &$ \frac{13}{6} $, $ \frac{3}{2} $,$ \frac{17}{48} $,$ \frac{1}{48} $  & 0,2,$1\pm0.876$,$1\pm0.558$,$1\pm0.295$\\
   3. & 1,3,3       & $ \frac{13}{6} $, $ \frac{71}{48} $,$ \frac{5}{16} $ & 0,2,$1^2$,$ 1\pm \frac{\sqrt{3}}{2} $,$ 1\pm \frac{\sqrt{5}}{2\sqrt{3}} $\\
   4. & 2,2,3       & $ \frac{9}{4} $, $ \frac{5}{3} $, $ \frac{7}{16} $,$ \frac{1}{48} $ & 0,2,$1\pm \frac{1}{\sqrt{2}}  $,$ 1\pm\frac{\sqrt{9\pm\sqrt{57}}}{2\sqrt{6}} $\\
   5. & 1,1,1,4     & $ \frac{15}{8} $, $ \frac{31}{32} $, $ \frac{3}{32} $ & 0,2,$ 1^{2} $,$ 1\pm \frac{\sqrt{3}}{2} $,$ 1\pm \frac{1}{2\sqrt{2}} $\\
   6. & 1,1,2,3     & 2,$ \frac{39}{32} $, $ \frac{7}{32} $ & 0,2,$ 1^{2} $,$ 1\pm\frac{\sqrt{4\pm\sqrt{2}}}{2\sqrt{2}} $\\
   7. & 1,2,2,2     & $ \frac{17}{8} $, $ \frac{3}{2} $, $ \frac{13}{32} $, $ \frac{1}{32} $ & 0,2,$ (1\pm \frac{1}{\sqrt{2}})^2 $,$ 1\pm \frac{1}{2\sqrt{2}} $\\
   8. & 1,1,1,1,3   & $ \frac{33}{20} $, $ \frac{13}{20} $& 0,2,$ 1^{4} $,$ 1\pm \sqrt{\frac{13}{20}} $\\
   9. & 1,1,1,2,2   &$ \frac{9}{5} $, $ \frac{19}{20} $, $ \frac{3}{20} $ & 0,2,$ 1^{2} $,$ 1\pm\sqrt{\frac{3}{10}}$,$ 1\pm \frac{1}{\sqrt{2}}$\\
  10. & 1,1,1,1,1,2 & $ \frac{17}{12} $, $ \frac{5}{12} $& 0,2,$ 1^{4} $,$ 1\pm \sqrt{\frac{5}{12}} $\\
    \hline
   \end{tabular}  
\end{center}

%%%%%%
%%%%%%
\subsection{Double starlike tree}
A tree $T$ is called \textit{double starlike} if it has exactly two vertices of degree greater than two. Let $H_m(p,q)$ be a double starlike tree obtained by attaching $p$ pendant vertices to one end-vertex of a path $P_m$ and $q$ pendant vertices to the other end-vertex of $P_m$. Thus,  $H_2(p,q)$ is a double star $S_{p+1,q+1}$, that is, a tree with exactly two non-pendant vertices with the degree $p+1$ and $q+1$ respectively. See figure (2) for  examples.
\begin{figure}[h]
\label{fig2}
\centering
\includegraphics[width=5.2cm]{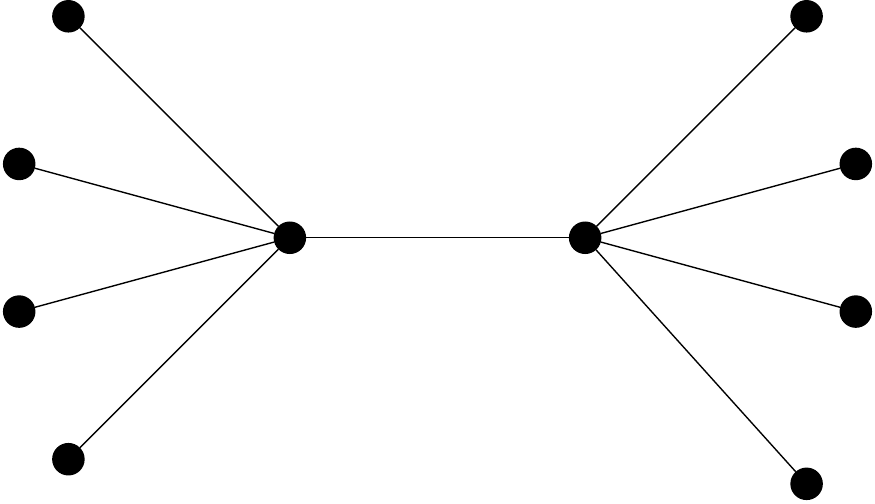}
\hspace{1cm}
\includegraphics[width=7.2cm]{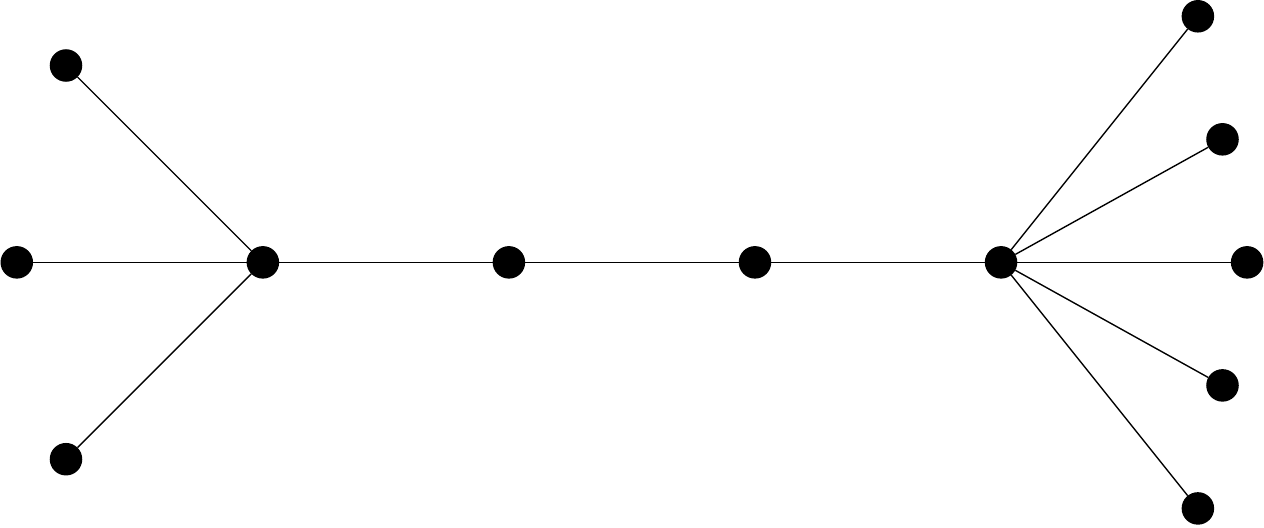}
\caption{ Double starlike tree  $H_2(4,4)$ and $H_4(3,5)$}
\end{figure}
\begin{lemma}
Let $C_{i,n}$ and $k$ be the number of $i$-matchings and maximal matching number, respectively, of $P_n$ ($n\geq3$), then 
$$R_{-1}^{(i)}(P_n)=\frac{1}{4^i}C_{i,n-2}+\frac{1}{4^{i-1}}C_{i-1,n-2}, \text{ and}$$
 $$\psi_{P_n}(y)=(y-1)\sum_{i=0}^{k-1}(-1)^i\frac{1}{4^i}C_{i,n-2}y^{k-1-i}, $$
 where $\psi_{P_n}(y)$ is defined as in corollary (\ref{cor2}).
\end{lemma}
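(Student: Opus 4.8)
The plan is to establish the two formulas in turn, deriving the polynomial identity from the index formula. Throughout I label the edges of $P_n$ as $e_1,\dots,e_{n-1}$ with $e_j=v_jv_{j+1}$, so that the two pendant edges $e_1,e_{n-1}$ have strength $s(e_1)=s(e_{n-1})=2$ (one endpoint of degree $1$, one of degree $2$), while the $n-3$ interior edges all have strength $4$. Since $P_n$ is a path, a set of edges is a matching exactly when the corresponding indices in $\{1,\dots,n-1\}$ contain no two consecutive integers, so $i$-matchings correspond to $i$-element sparse subsets of a line of $n-1$ points. I will use the standard count that the number of ways to choose $j$ pairwise non-consecutive elements from a line of $N$ points is $\binom{N-j+1}{j}$, which in particular gives $C_{i,m}=\binom{m-i}{i}$ for $P_m$.

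For the first identity I would partition $\mathcal{M}_i(P_n)$ according to the number $a\in\{0,1,2\}$ of pendant edges used, since $a$ is precisely what controls the product of reciprocal strengths. If $a=0$ the matching is an $i$-subset of the $n-3$ interior edges, each of weight $\frac{1}{4^i}$, and there are $\binom{n-2-i}{i}=C_{i,n-2}$ of them. If $a=1$ (say $e_1$ is used, forcing $e_2$ out, and symmetrically for $e_{n-1}$) the remaining $i-1$ edges are chosen sparsely from the interior block $\{3,\dots,n-2\}$ of $n-4$ edges, giving $\binom{n-2-i}{i-1}$ matchings in each of the two symmetric subcases, each of weight $\frac{1}{2}\cdot\frac{1}{4^{i-1}}$. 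If $a=2$ both pendant edges and hence $e_2,e_{n-2}$ are determined, leaving $i-2$ sparse choices from $n-5$ edges, i.e. $\binom{n-2-i}{i-2}$ matchings of weight $\frac{1}{4^{i-1}}$. Summing the three contributions and collapsing the $a=1,2$ terms by Pascal's rule, $\binom{n-2-i}{i-1}+\binom{n-2-i}{i-2}=\binom{n-1-i}{i-1}=C_{i-1,n-2}$, gives $R_{-1}^{(i)}(P_n)=\frac{1}{4^i}C_{i,n-2}+\frac{1}{4^{i-1}}C_{i-1,n-2}$.

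For the polynomial I would start from $\psi_{P_n}(y)=\sum_{i=0}^{k}(-1)^iR_{-1}^{(i)}(P_n)\,y^{k-i}$ as given by Corollary \ref{cor2}, substitute the index formula just proved, and split into two sums $S_1=\sum_{i}(-1)^i\frac{1}{4^i}C_{i,n-2}\,y^{k-i}$ and $S_2=\sum_{i}(-1)^i\frac{1}{4^{i-1}}C_{i-1,n-2}\,y^{k-i}$. Reindexing $S_2$ by $j=i-1$ (the $j=-1$ term vanishing since $C_{-1,n-2}=0$) turns it into $-\sum_{j=0}^{k-1}(-1)^j\frac{1}{4^j}C_{j,n-2}\,y^{k-1-j}$. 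The decisive observation is that the maximum matching number of $P_{n-2}$ is $\lfloor n/2\rfloor-1=k-1$, so $C_{k,n-2}=0$ and the top term of $S_1$ drops out, leaving $S_1=y\sum_{i=0}^{k-1}(-1)^i\frac{1}{4^i}C_{i,n-2}\,y^{k-1-i}$. Adding $S_1$ and $S_2$ then factors out exactly $(y-1)$ times the common sum, which is the asserted expression for $\psi_{P_n}(y)$.

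The enumeration itself is routine, but the step needing the most care is the endpoint bookkeeping in the first identity: using a pendant edge forbids its unique neighbouring interior edge, so the line of available interior edges shrinks differently in each case $a$, and one must keep the forbidden index ranges consistent. One should also note the degenerate case $n=3$, where $e_1$ and $e_{n-1}$ are themselves adjacent so that $a=2$ cannot occur; this causes no trouble because $k=1$ there and $a=2$ would require $i\ge 2$, which the relevant binomial coefficients correctly kill. The only conceptual point in the second identity is recognizing that $C_{k,n-2}=0$, which is what makes $(y-1)$ a genuine factor rather than a merely formal rearrangement.
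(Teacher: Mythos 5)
Your proposal is correct and follows essentially the same route as the paper: both decompose the $i$-matchings of $P_n$ by how many of the two strength-$2$ pendant edges they use (your Pascal's-rule step is exactly the paper's path-matching recursion $C_{i-1,n-3}+C_{i-2,n-4}=C_{i-1,n-2}$), and both obtain the factorization of $\psi_{P_n}$ by substituting the index formula and regrouping, with the key observation $C_{k,n-2}=0$ (which the paper uses implicitly via $R_{-1}^{(k)}(P_n)=\tfrac{1}{4^{k-1}}C_{k-1,n-2}$). Your write-up is merely more explicit about the endpoint bookkeeping and the reindexing; no substantive difference.
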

\begin{proof}
 The maximum matching number of $P_n$ is $\lfloor \frac{n}{2}\rfloor $ and  $C_{i,n}=\binom{n-i}{i}$.  $P_n$ has $n-3$ edges of strength 4 and two edges of strength 2. Thus,
\begin{eqnarray*}
R_{-1}^{(i)}(P_n)&=&\frac{1}{4^i} C_{i,n-2}+2\times\frac{1}{2}\times\frac{1}{4^{i-1}} C_{i-1,n-3}+\frac{1}{4^{i-1}}C_{i-2,n-4}\\
%&=&\frac{1}{4^i}C_{i,n-2}+\frac{1}{4^{i-1}}[C_{i-1,n-3}+C_{i-2,n-4}]\\
&=&\frac{1}{4^i}C_{i,n-2}+\frac{1}{4^{i-1}}C_{i-1,n-2}.
\end{eqnarray*}
which proves the first part of the theorem.\\
For the second part of the lemma, we have, $R^{(k)}_{-1}(P_n)=\frac{1}{4^{k-1}}C_{k-1,n-2}$.\\
Therefore, 
\begin{eqnarray*}
\psi_{P_n}(y)&=&(y^k-y^{k-1})-\frac{1}{4}C_{1,n-2}(y^{k-1}-y^{k-2})+\cdots\\&&+(-1)^{k-1}\frac{1}{4^{k-1}}C_{k-1,n-2}(y-1)\\
&=&(y-1)\sum_{i=0}^{k-1}(-1)^i\frac{1}{4^i}C_{i,n-2}y^{k-1-i}.
\end{eqnarray*}
\end{proof}

\begin{theorem}
\label{th31}
Let $T$ be the double starlike tree $H_m(p,q)$, then
\begin{enumerate}
\item[(i)]
T has the maximum matching number $\lfloor\frac{m}{2}\rfloor+1$, and the multiplicity of the eigenvalue 1 is $p+q-2$ if $m$ is even and  $p+q-1$ if $m$ is odd,
\item[(ii)]
$R_{-1}^{(i)}(T)=R_{-1}^{(i)}(P_m)+\frac{pq}{(p+1)(q+1)}R_{-1}^{(i-1)}(P_m)+\frac{p+q}{2(p+1)(q+1)}R_{-1}^{(i-1)}(P_{m-1})$,
\item[(iii)] and
\begin{eqnarray}
\psi_T(y)=\begin{cases}
(y-r_1)\psi_{P_m}(y)-r_2\psi_{P_{m-1}}(y) &\mbox{if } m \text{ odd,}\\
y(\psi_{P_m}(y)-r_2\psi_{P_{m-1}}(y))-r_1\psi_{P_m}(y)&\mbox{if } m \text{ even,}
\end{cases}
\end{eqnarray}
where $r_1=\frac{pq}{(p+1)(q+1)}$, $r_2=\frac{p+q}{2(p+1)(q+1)}$ and $\psi_T(y)$ is defined as in corollary (\ref{cor2}).
\end{enumerate}
\end{theorem}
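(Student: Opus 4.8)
The plan is to handle all three parts through one device — the matching generating function $\mathcal{R}_G(t):=\sum_{i\ge 0}R_{-1}^{(i)}(G)\,t^i$, whose coefficients are exactly the indices in the statement — together with the very simple local structure of $H_m(p,q)$ around its two branch vertices. Throughout I write the central path as $v_1,\dots,v_m$, with $p$ pendants at $v_1$ and $q$ at $v_m$, so $n=m+p+q$, $d_{v_1}=p+1$, $d_{v_m}=q+1$, and every internal path vertex has degree $2$. For part (i) I would exhibit a matching of the asserted size (saturate $v_1$ and $v_m$ each by one pendant edge and take a (near-)perfect matching of the internal path $v_2\cdots v_{m-1}$) and match it against the upper bound: in any matching at most one of the $p$ pendant edges at $v_1$, and at most one of the $q$ at $v_m$, is used, so at least $(p-1)+(q-1)$ vertices are always unsaturated; hence $|M|\le\lfloor(m+2)/2\rfloor=\lfloor m/2\rfloor+1$, giving $k=\lfloor m/2\rfloor+1$. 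The multiplicity of the eigenvalue $1$ is then immediate from Corollary \ref{cor2} as $n-2k$, which is $p+q-2$ for $m$ even and $p+q-1$ for $m$ odd.

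For part (ii) I would classify every matching of $T$ according to whether it saturates $v_1$ by a pendant edge and whether it saturates $v_m$ by a pendant edge, giving four cases; deleting the saturated branch vertices reduces each case to a matching of a sub-path of $v_1\cdots v_m$. Since all $p$ pendant edges at $v_1$ share the weight $\tfrac1{p+1}$ (and the $q$ at $v_m$ the weight $\tfrac1{q+1}$), choosing such a pendant contributes the aggregate factor $\tfrac{p}{p+1}t$ (resp.\ $\tfrac{q}{q+1}t$). Writing $g_\ell(t)$ for the matching generating function of a path on $\ell$ vertices all of whose edges have weight $\tfrac14$, and using $F_\ell(\alpha,\beta)=g_{\ell-2}+(\alpha+\beta)t\,g_{\ell-3}+\alpha\beta\,t^2 g_{\ell-4}$ (for $\ell\ge4$, small paths treated directly) for a path whose two end–edges have weights $\alpha,\beta$, the four cases assemble into a single expression for $\mathcal{R}_T(t)$ in terms of $g_{m-2},g_{m-3},g_{m-4},g_{m-5}$.

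The heart of the matter — and the step I expect to be fiddly — is reconciling these weighted sub-paths with the \emph{standard} paths $P_m,P_{m-1}$, whose end edges carry weight $\tfrac12$ rather than $\tfrac1{2(p+1)}$, $\tfrac1{2(q+1)}$ or $\tfrac14$. Everything collapses thanks to two elementary facts: the path recursion $g_\ell=g_{\ell-1}+\tfrac{t}{4}g_{\ell-2}$ (equivalently $C_{i,\ell}=C_{i,\ell-1}+C_{i-1,\ell-2}$), and the identity $\mathcal{R}_{P_\ell}(t)=(1+t)g_{\ell-2}(t)$, which is just the preceding Lemma rewritten. Substituting $\tfrac1{2(p+1)}=\tfrac12\bigl(1-\tfrac{p}{p+1}\bigr)$ and its counterpart, then using the recursion to eliminate $g_{m-4}$ and $g_{m-5}$, I expect the coefficient of $g_{m-2}$ to reduce to $(1+t)(1+r_1t)$ and that of $g_{m-3}$ to $r_2t(1+t)$, leaving exactly
\[
\mathcal{R}_T(t)=(1+r_1 t)\,\mathcal{R}_{P_m}(t)+r_2\,t\,\mathcal{R}_{P_{m-1}}(t),\qquad r_1=\frac{pq}{(p+1)(q+1)},\quad r_2=\frac{p+q}{2(p+1)(q+1)},
\]
whose $t^i$–coefficient is precisely the claimed formula for $R_{-1}^{(i)}(T)$.

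Part (iii) is then a formal consequence. By the definition of $\psi$ in Corollary \ref{cor2} one has, for any tree $G$ with matching number $k_G$, the identity $\psi_G(y)=y^{k_G}\mathcal{R}_G(-1/y)$ (the defining series read backwards). I would substitute $t=-1/y$ in the displayed identity and multiply through by $y^{k_T}$, where $k_T=\lfloor m/2\rfloor+1$. The only delicate bookkeeping is the power of $y$ left on the $\psi_{P_{m-1}}$ term, namely $y^{\,k_T-1-k_{P_{m-1}}}$: when $m$ is odd, $k_{P_{m-1}}=k_{P_m}=\tfrac{m-1}{2}$ and this exponent is $0$, yielding $\psi_T(y)=(y-r_1)\psi_{P_m}(y)-r_2\psi_{P_{m-1}}(y)$; when $m$ is even, $k_{P_{m-1}}=k_{P_m}-1=\tfrac m2-1$, the exponent is $1$, the term gains an extra factor $y$, and one obtains $\psi_T(y)=y\bigl(\psi_{P_m}(y)-r_2\psi_{P_{m-1}}(y)\bigr)-r_1\psi_{P_m}(y)$. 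These are exactly the two cases in the statement, so the parity split is forced entirely by the matching numbers of $P_m$ and $P_{m-1}$.
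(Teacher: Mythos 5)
Your proposal is correct, and at its core it performs the same counting as the paper: classify the matchings of $H_m(p,q)$ by their behaviour at the two branch vertices, reduce each class to matchings of sub-paths counted by $C_{i,\ell}=\binom{\ell-i}{i}$, and collapse everything using the path recursion $C_{i,\ell}=C_{i,\ell-1}+C_{i-1,\ell-2}$ together with the Lemma $R_{-1}^{(i)}(P_n)=\frac{1}{4^i}C_{i,n-2}+\frac{1}{4^{i-1}}C_{i-1,n-2}$ (your identity $\mathcal{R}_{P_n}(t)=(1+t)g_{n-2}(t)$). The packaging differs in a way that pays off, though. The paper works coefficient-by-coefficient: it expands $R_{-1}^{(i)}(T)$ into six terms (one per configuration of pendant edges and the edges $v_1v_2$, $v_{m-1}v_m$), regroups, and then proves (iii) by a separate parity analysis of the leading coefficient $R_{-1}^{(k)}(T)$, ending tersely with ``hence the result follows.'' You instead establish the single polynomial identity $\mathcal{R}_T(t)=(1+r_1t)\,\mathcal{R}_{P_m}(t)+r_2t\,\mathcal{R}_{P_{m-1}}(t)$ --- I checked that your four-case assembly does collapse to this via the recursion $g_\ell=g_{\ell-1}+\tfrac{t}{4}g_{\ell-2}$ --- after which (iii) is literally the substitution $t=-1/y$ in $\psi_G(y)=y^{k_G}\mathcal{R}_G(-1/y)$, with the parity split forced mechanically by $k_{P_{m-1}}$ equalling $k_T-1$ or $k_T-2$; this is cleaner and more transparent than the paper's conclusion, and you also supply an actual proof of (i), which the paper dismisses as ``easy to verify.'' Two caveats you share with the paper and should state explicitly: the identity needs $m\ge3$ (for $m=2$ one gets $R_{-1}(H_2(p,q))=1+r_1$, not $1+r_1+r_2$, which is why the paper computes $m=2$ separately inside its proof), and reading the multiplicity of the eigenvalue $1$ as exactly $n-2k$ from Corollary \ref{cor2} uses that $R_{-1}^{(k)}(T)>0$ (true, since $T$ has a $k$-matching), so that no zero $\alpha_i$ of $\psi_T$ contributes extra copies of $1$.
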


\begin{proof}
\begin{enumerate}
\item[(i)] This is easy to verify.
\item[(ii)]
% $P_m$ has maximum matching number $\lfloor \frac{n}{2}\rfloor $. Let $C_{k,n}$ denotes the number of $k$-matchings of $P_m$, then $C_{k,n}=\binom{n-k}{k}$. Now $P_m$ has $n-3$ edges of strength 4 and two edges of strength 2. Thus we have,
%\begin{eqnarray*}
%R_{-1}^{(k)}(P_m)&=&\frac{1}{4^k} C_{k,n-2}+2\times\frac{1}{2}\times\frac{1}{4^{k-1}} C_{k-1,n-3}+\frac{1}{4^{k-1}}C_{k-2,n-4}\\
%&=&\frac{1}{4^k}C_{k,n-2}+\frac{1}{4^{k-1}}[C_{k-1,n-3}+C_{k-2,n-4}]\\
%&=&\frac{1}{4^k}C_{k,n-2}+\frac{1}{4^{k-1}}C_{k-1,n-2}
%\end{eqnarray*}
When  $m\geq3$, T has $p$ edges of strength $p+1$, $q$ edges of strength $q+1$, $1$ edge of strength $2(p+1)$, $1$ edges of strength $2(q+1)$ and rest $m-3$ edges of strength $4$. Thus, 
\begin{eqnarray*}
R_{-1}^{(i)}(T)&=&\frac{1}{4^i} C_{i,m-2}
+\frac{1}{4^{i-1}} \Big{[}\frac{p}{p+1}+\frac{q}{q+1}\Big{]}C_{i-1,m-2}+\frac{1}{4^{i-1}}\Big{[}\frac{1}{2(p+1)}+\frac{1}{2(q+1)}\Big{]} C_{i-1,m-3}\\
&&+\frac{1}{4^{i-2}}\frac{pq}{(p+1)(q+1)} C_{i-2,m-2}+\frac{1}{4^{i-2}} \frac{p+q}{2(p+1)(q+1)}C_{i-2,m-3}\\
&&+\frac{1}{4^{i-2}} \frac{1}{4(p+1)(q+1)}C_{i-2,m-4}\\
%&=&\frac{1}{4^i} C_{i,n-2}
%+\frac{1}{4^{i-1}}\frac{p+q+2pq}{(p+1)(q+1)}C_{i-1,n-2}+\frac{1}{4^{i-1}}\frac{p+q}{2(p+1)(q+1)}C_{i-1,n-3}\\
%&&+\frac{1}{4^{i-1}}\frac{1}{(p+1)(q+1)}C_{i-1,n-3}
%+\frac{1}{4^{i-2}}\frac{pq}{(p+1)(q+1)} C_{i-2,n-2}\\
%&&+\frac{1}{4^{i-2}} \frac{p+q}{2(p+1)(q+1)}C_{i-2,n-3}+\frac{1}{4^{i-1}} \frac{1}{(p+1)(q+1)}C_{i-2,n-4}\\
&=&\frac{1}{4^i} C_{i,m-2}+\frac{1}{4^{i-1}}C_{i-1,m-2}+\frac{1}{4^{i-1}}\frac{pq}{(p+1)(q+1)}C_{i-1,m-2}\\
&&+\frac{1}{4^{i-2}}\frac{pq}{(p+1)(q+1)} C_{i-2,m-2} +\frac{1}{4^{i-1}}\frac{p+q}{2(p+1)(q+1)}C_{i-1,m-3}\\
&&+\frac{1}{4^{i-2}} \frac{p+q}{2(p+1)(q+1)}C_{i-2,m-3}\\
&=&R_{-1}^{(i)}(P_m)+\frac{pq}{(p+1)(q+1)}R_{-1}^{(i-1)}(P_m)+\frac{p+q}{2(p+1)(q+1)}R_{-1}^{(i-1)}(P_{m-1})
\end{eqnarray*}
Again if $m=2$, then
 \begin{eqnarray*}
R_{-1}(T)&=&1+\frac{pq}{(p+1)(q+1)} \text{, and }\\
R_{-1}^{(2)}(T)&=&\frac{pq}{(p+1)(q+1)}.
\end{eqnarray*} 
\item[(iii)]
We have,
$\psi_T(y)=y^{k}-R_{-1}(T)y^{k-1}+\cdots +(-1)^kR_{-1}^{(k)}(T)$ and
$R_{-1}^{(i)}(T)=R_{-1}^{(i)}(P_m)+r_1R_{-1}^{(i-1)}(P_m)+r_2R_{-1}^{(i-1)}(P_{m-1})$, $1\leq i\leq k$, where $k$ is the maximum matching number of $T$. Hence
the maximum matching number of $P_m$ is $k-1$ and the same is of $P_{m-1}$ is $k-1$ if $m$ odd and $k-2$ otherwise.\\
Thus,
\begin{eqnarray}
R_{-1}^{(k)}(T)=\begin{cases}
r_1R_{-1}^{(k-1)}(P_m)+r_2R_{-1}^{(k-1)}(P_{m-1})&\mbox{if } m \text{ odd,}\\
r_1R_{-1}^{(k-1)}(P_m)&\mbox{if } m \text{ even.}
\end{cases}
\end{eqnarray} 
Hence the result follows.
\end{enumerate}
\end{proof}
\textbf{Example: }Consider the double starlike trees as in figure (2). For the first tree, $T_1=H_2(4,4)$ we have $R_{-1}^{(1)}(T_1)=\frac{41}{25}$ and $R_{-1}^{(2)}(T_1)=\frac{16}{25}$. Thus the eigenvalues of $T_1$ are $0$, $1^6$, $2$ $0.2$, $1.8$.
The second tree, $T_2=H_4(3,5)$ has maximum matching number equals to $3$ and the Randi\'c indices of matching are
$R_{-1}^{(1)}(T_2)=\frac{49}{24}$, $R_{-1}^{(2)}(T_2)=\frac{115}{96}$ and $R_{-1}^{(3)}(T_2)=\frac{5}{32}$. Hence the eigenvalues of $T_2$ are $0$, $1^6$, $2$, $1\pm 0.4263$, $1\pm0.9273 $.

\begin{theorem}
Let $T_1=H_m(p_1,q_1)$ and $T_2=H_m(p_2,q_2)$ be $\cal{L}$-cospectral. Then, $T_1$ and $T_2$ are isomorphic.
\end{theorem}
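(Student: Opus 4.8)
The plan is to show that $\mathcal{L}$-cospectrality forces $\{p_1,q_1\}=\{p_2,q_2\}$, which is exactly the condition for $H_m(p_1,q_1)$ and $H_m(p_2,q_2)$ to be isomorphic: for fixed $m$ the tree $H_m(p,q)$ is determined up to isomorphism by the unordered pair of pendant counts at the two ends of the path (an isomorphism either preserves or reverses the path, the latter swapping $p$ and $q$). First I would extract one invariant for free. Cospectral graphs have the same order, and $H_m(p,q)$ has $n=m+p+q$ vertices with $m$ the same for both trees, so $p_1+q_1=p_2+q_2=:s$.

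Next I would upgrade cospectrality to equality of all the matching Randi\'c indices. By Corollary \ref{cor2} the eigenvalues of a tree other than $1$ are $1\pm\sqrt{\alpha_i}$, where the $\alpha_i$ are the roots of $\psi_T(y)=\sum_{i=0}^{k}(-1)^iR_{-1}^{(i)}(T)y^{k-i}$, while $1$ occurs with multiplicity $n-2k$. Since $T$ is bipartite the spectrum is symmetric about $1$, so from the common spectrum one reads off the common value of $k$ (from the multiplicity of the eigenvalue $1$, using that $\psi_T(0)=(-1)^kR_{-1}^{(k)}(T)\neq0$ because a maximum matching exists, so no extra copies of $1$ come from $\psi_T$) together with the common multiset $\{\alpha_i\}$ and its multiplicities. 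Hence $\psi_{T_1}=\psi_{T_2}$, and comparing coefficients gives $R_{-1}^{(i)}(T_1)=R_{-1}^{(i)}(T_2)$ for every $i$.

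Now I would feed these equalities into Theorem \ref{th31}(ii). Because $m$ is the same for both trees, the terms $R_{-1}^{(i)}(P_m)$ and $R_{-1}^{(i)}(P_{m-1})$ are identical for $T_1$ and $T_2$; only $r_1=\frac{pq}{(p+1)(q+1)}$ and $r_2=\frac{p+q}{2(p+1)(q+1)}$ carry the dependence on $(p,q)$. Writing $\delta_j=r_j(T_1)-r_j(T_2)$ and using the indices $i=1,2$ (legitimate since $k=\lfloor m/2\rfloor+1\ge2$) yields the homogeneous system
\[
\delta_1+\delta_2=0,\qquad R_{-1}^{(1)}(P_m)\,\delta_1+R_{-1}^{(1)}(P_{m-1})\,\delta_2=0 .
\]
Its determinant is $R_{-1}^{(1)}(P_{m-1})-R_{-1}^{(1)}(P_m)=\tfrac{m}{4}-\tfrac{m+1}{4}=-\tfrac14\neq0$, using $R_{-1}^{(1)}(P_n)=\tfrac{n+1}{4}$, so $\delta_1=\delta_2=0$; that is, $r_1$ and $r_2$ agree on the two trees. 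In the degenerate case $m=2$ one argues even more directly, since there $R_{-1}^{(2)}(T)=r_1$, so cospectrality gives $r_1(T_1)=r_1(T_2)$ at once.

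Finally I would invert the relation between $(r_2,s)$ and $(p,q)$. With $s=p+q$ fixed, $r_2=\frac{s}{2(pq+s+1)}$ is strictly monotone in the product $pq$ (and likewise $r_1=\frac{pq}{pq+s+1}$), so equality of $r_2$ together with $s_1=s_2$ forces $p_1q_1=p_2q_2$. Then $p_1+q_1=p_2+q_2$ and $p_1q_1=p_2q_2$ exhibit $\{p_1,q_1\}$ and $\{p_2,q_2\}$ as the root sets of the same quadratic, hence they coincide and $T_1\cong T_2$. I expect the crux to be the second step---justifying that the entire matching profile $\bigl(R_{-1}^{(i)}\bigr)_i$, not merely $a_2$, is recoverable from the spectrum---and the nonsingularity of the $2\times2$ system; the monotone inversion and the small-$m$ bookkeeping are routine.
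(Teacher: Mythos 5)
Your route is essentially the paper's own (the paper's proof is a one-line appeal to Theorems \ref{cp1} and \ref{th31}): equality of orders gives $p_1+q_1=p_2+q_2$, recoverability of all the $R_{-1}^{(i)}$ from the spectrum plus Theorem \ref{th31}(ii) gives $p_1q_1=p_2q_2$, and the quadratic/root-set argument finishes. Your justification that the whole matching profile is spectrally determined is sound (one can even see it more directly: by Theorem \ref{cp1}, $\phi_T$ expanded in the basis $\{(x-1)^j\}$ has the $(-1)^iR_{-1}^{(i)}(T)$ as its coefficients, and such expansions are unique), your $m=2$ case is correct, and for $m\ge 4$ the $2\times2$ system argument works.

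There is, however, a concrete gap at $m=3$. Your determinant computation uses $R_{-1}^{(1)}(P_n)=\tfrac{n+1}{4}$, which holds only for $n\ge 3$: the path $P_2$ is a single edge whose two endpoints have degree $1$, so $R_{-1}^{(1)}(P_2)=1$, not $\tfrac{3}{4}$. Hence for $m=3$ your determinant is $R_{-1}^{(1)}(P_2)-R_{-1}^{(1)}(P_3)=1-1=0$, and the system is genuinely singular: since $R_{-1}^{(2)}(P_3)=0$ and $R_{-1}^{(1)}(P_3)=R_{-1}^{(1)}(P_2)=1$, the $i=2$ equation $R_{-1}^{(2)}(T)=r_1+r_2$ is literally the same equation as the $i=1$ one, so $r_1$ and $r_2$ cannot be separated. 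The repair is easy and in the spirit of your final paragraph: you never need $r_1$ and $r_2$ individually, only $pq$. With $s=p+q$ fixed, $r_1+r_2=\frac{2pq+s}{2(pq+s+1)}$ is strictly increasing in the product $pq$ (its derivative in $u=pq$ is $\frac{s+2}{2(u+s+1)^2}>0$), so for $m=3$ the single equation $\delta_1+\delta_2=0$, i.e.\ equality of $R_{-1}^{(1)}$, already forces $p_1q_1=p_2q_2$. With that case patched, your proof is complete and coincides with the paper's intended argument.
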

\begin{proof}
From theorem (\ref{th31}) and (\ref{cp1}) we have,  $p_1+q_1=p_2+q_2$ and $p_1q_1=p_2q_2$. Hence the proof.

%From part (i) of the theorem (\ref{th31}) we get $n_1=n_2$, that is, 
%\bel{eq13}
%p_1+q_1=p_2+q_2.\qe
%Now equation (\ref{eq1}) implies
%$R_{-1}^{(i)}(T_1)=R_{-1}^{(i)}(T_2),\text{ for }i\geq 0 $.
%Thus, from part (ii) of the theorem (\ref{th31}) we get
%\bel{eq15}
%p_1q_1=p_2q_2.
%\qe
%Now the equation (\ref{eq13}) and equation (\ref{eq15}) show $p_1=p_2 \text{ and }q_1=q_2$ 
%which completes the proof.

\end{proof}
\section{Summary and Conclusions}
The Zegreb indeices and  the Randi\'c index are of great importance for molecular chemistry. They are used to characterize the molecular branching in chemical graphs. The general Randi\'c indices for matrching can also play an important role in this  area. It can be used to characterize different classes  of graphs. 
The estimation of general Randi\'c indices for matching for trees, which are the simplest structure amongest the graphs, is much easier than others.
  Corollary  (\ref{cor1}) states that, for a $n$-vertex tree with maximum matching number $k$ it is suffitient to calculate the zeros of a $k$ degree polynomial to determine its complete spectrum. 
  Furthermore, the corollay (\ref{cor1}) shows that the general Randi\'c indices are related by a simple equation. Thus, we only need to calculate the $k-1$ general Randi\'c indices for matching to compute the complete set of eigenvalues.

\section{Acknowledgements}
We are very grateful to the referees for detailed comments and suggestions, which helped to improve the manuscript. The author Ranjit Mehatari is supported by CSIR, Grant no 09/921(0080)/2013-EMR-I, India.

\end{document}